\documentclass[abstracton]{scrartcl}

\deffootnote{1.5em}{1em}{\textsuperscript{\thefootnotemark}\ }

\usepackage[utf8]{inputenc}
\usepackage[british]{babel}

\usepackage{amsmath,amssymb,amsthm}
\swapnumbers
\theoremstyle{plain}
\newtheorem{theorem}{Theorem}[section]

\newtheorem{obs}[theorem]{Observation}
\newtheorem{prop}[theorem]{Proposition}
\newtheorem{cor}[theorem]{Corollary}
\theoremstyle{definition}
\newtheorem{spltng}[theorem]{Splitting a finite antichain}

\let\notto\nleq
\def\downs#1{{#1}^{\downarrow}}
\def\ups#1{{#1}^{\uparrow}}
\DeclareMathSymbol{\heyt}{\mathbin}{symbols}{"29}
\let\heq\sim
\let\inc\parallel

\def\A{\mathcal{A}}
\def\F{\mathcal{F}}

\def\CD{\mathcal{C}(\Delta)}
\def\D{\mathcal{D}}
\def\P{\mathcal{P}}
\def\S{\mathcal{S}}
\def\DD{\mathbb{D}}
\def\GG{\mathbb{G}}

\def\Ds{$\Delta$-structure}
\def\Df{$\Delta$-forest}
\def\Dt{$\Delta$-tree}

\def\bs#1{\underline{#1}}

\DeclareMathOperator{\Block}{Block}
\DeclareMathOperator{\Inc}{Inc}


\begin{document}

\title{Splitting finite antichains\\in the homomorphism order}

\author{
Jan Foniok\\
	\small ETH Zurich, Institute for Operations Research\\[-5pt]
	\small R\"amistrasse 101, 8092 Zurich, Switzerland\\[-5pt]
	\small E-mail: \texttt{foniok@math.ethz.ch}
\and
Jaroslav Nešetřil\\
	\small Charles University, Department of Applied Mathematics\\[-5pt]
	\small and Institute for Theoretical Computer Science (ITI)\thanks{%
		The Institute for Theoretical Computer Science (ITI) is supported as
		project 1M0545 by the Ministry of Education of the Czech Republic.}\\[-5pt]
	\small Malostranské nám. 25, 118 00 Praha, Czech Republic\\[-5pt]
	\small E-mail: \texttt{nesetril@kam.mff.cuni.cz}
}

\date{Extended Abstract\\\small 9th March 2008}

\maketitle

\begin{abstract}
A structural condition is given for finite maximal antichains in the
homomorphism order of relational structures to have the splitting
property. It turns out that non-splitting antichains appear only at the
bottom of the order. Moreover, we examine looseness and finite antichain
extension property for some subclasses of the homomorphism poset. Finally,
we take a look at cut-points in this order.

\bigskip

\textbf{Keywords:} homomorphism order, maximal antichain, splitting property
\end{abstract}

\section{Introduction}

A homomorphism from a graph~$G$ to a graph~$H$ is a mapping $f:V(G)\to
V(H)$ that preserves the edges of~$G$, that is if $xy\in E(G)$ then
$f(x)f(y)\in E(H)$. The omission of round or curly brackets indicates that the
definition applies to both undirected and directed graphs, as does the
discussion in the next few paragraphs. The definition applies equally
well to finite and infinite graphs, but all graphs we consider in this
paper are finite.

Existence of homomorphisms defines a relation on the class of all
graphs, which is reflexive (the identity mapping is a homomorphism) and
transitive (the composition of homomorphisms is a homomorphism); thus
it is a preorder. Hence we write $G\le H$ if there exists a homomorphism
from~$G$ to~$H$.

Furthermore we write $G\sim H$ if $G\le H$ and at the same time
$H\le G$. Since $\le$~is a preorder, the relation~$\sim$ is an equivalence
relation. The preorder~$\le$ induces a partial order on equivalence
classes of~$\sim$, which is called the \emph{homomorphism order}.

A \emph{core} is a graph which admits a homomorphism to no proper
subgraph of itself. It is easy to show that there is exactly one
core (up to isomorphism) in each equivalence class of~$\sim$
(see~\cite{HelNes:GrH}). So we have a canonical representative
in each class, and sometimes it may be convenient to consider the
homomorphism order to be a relation on the set of all cores rather than
$\sim$-equivalence classes.

Several properties of the homomorphism order have been examined. Perhaps
the earliest result is its universality: Hedrlín~\cite{Hed:Uni} proved
that the homomorphism order contains every countable partial order as
an induced suborder. It came as a surprise that the same is true about
the homomorphism order of finite directed paths, which was proved by
Hubička and Nešetřil~\cite{HubNes:Finite,HuNe:Paths}. Another considered property
was density. Welzl~\cite{Wel:Dense} showed that there is only one gap
in the homomorphism order of undirected graphs. There are infinitely
many gaps in the order of directed graphs, and they were described by
Nešetřil and Tardif~\cite{NesTar:Dual}; we give an overview in
Section~\ref{ss:dual}.

We are interested in finite maximal antichains in the homomorphism order.

As one of the first applications of the probabilistic method,
P.~Erdős~\cite{Erd:Gtp} showed that there exist undirected graphs with
arbitrary large girth and arbitrary large chromatic number. Thus if
$\A$~is a set of undirected graphs that are not bipartite, there exists
a graph~$H$ that is incomparable with all elements of~$\A$: it suffices
to take~$H$ which has both chromatic number and girth larger than any
graph in~$\A$. Hence each finite maximal antichain contains a bipartite
graph or a graph with a loop; but there are only two bipartite cores and
only one core with a loop. Therefore there are only three finite maximal
antichains: $\{K_1\}$, $\{K_2\}$ and $\{\top\}$, where $\top$~denotes
the graph consisting of a single vertex with a loop.

The situation is more intricate in the order of digraphs. There are four
one-element maximal antichains: $\{K_1\}$, $\{\vec P_1\}$, $\{\vec P_2\}$
and $\{\top\}$; where $\top$~is the graph with a single vertex with a
loop as before, and $\vec P_k$~denotes the directed path with $k$~edges.

Maximal antichains of size~2 were classified by Nešetřil and
Tardif~\cite{NesTar:MAC}. They showed that they are the sets $\{F,D\}$
such that $(F,D)$ is a \emph{homomorphism duality}: a pair of graphs
satisfying that $F\le X$ is for any digraph~$X$ equivalent to $X\nleq D$.

In general, a maximal antichain~$\A$ \emph{splits} if it is
a disjoint union $\A=\F\cup\D$ such that (using fairly standard
notation\footnote{$\ups{\F}=\{X: \exists F\in\F,\ F\le X\}$ is the upset
generated by~$\F$; and $\downs{\D}=\{X: \exists D\in\D,\ X\le D\}$ is the
downset generated by~$\D$}) the whole poset $\P=\ups{\F}\cup \downs{\D}$.
Equivalently, $(\F,\D)$ is a splitting of~$\A$ if and only if
$\ups\A=\ups\F$ and $\downs\A=\downs\D$.  The result mentioned in the
previous paragraph implies that all maximal antichains of size~2 in
the homomorphism order of digraphs split, because $\ups\A=\ups F$ and
$\downs\A=\downs D$. Later, Foniok, Nešetřil and Tardif~\cite{FNT:GenDu}
proved that in the homomorphism order of digraphs, all finite maximal
antichains of size greater than one split.

The splitting property of maximal antichains in general posets has been
extensively studied, see~\cite{AhlErdGra:A-splitting,AhlKha:Splitting,
DufSan:Finite,DufSan:Splitting,Dza:A-Note,Erd:Splitting,Erd:Some,ErdSou:How-to-split}.
Here we are concerned with the splitting property of maximal antichains
in the homomorphism order of finite relational structures. Relational
structures are generalisations of graphs; they have a set of vertices
and multiple sets of edges, each of which is a relation that is
not necessarily binary.  We introduce relational structures and the
homomorphism order in Section~\ref{sec:horse}.

Extension of antichains is the topic of
Section~\ref{sec:ext}. There we summarise and extend recent results
of~\cite{DufErdNes:Antichains-in-the-homomorphism}. Several properties
are defined that imply extensibility of finite antichains to splitting
or non-splitting infinite maximal antichains. We also look at the problem
of cut-points, elements of the poset that cut an interval into two. Some
cut-points are again linked to dualities.

In Section~\ref{sec:split}, we show that almost all finite maximal
antichains in this homomorphism order have the splitting property. The
exceptions appear at the very bottom of the order, and are formed
by structures with few edges. For the antichains that split we show
how to partition them into $\F$ and~$\D$. This has been known for
digraphs~\cite{FNT:WG06} and structures with one relation of arbitrary
arity~\cite{FNT:GenDu}.

Lastly, we list some open questions and suggestions for future work
in Section~\ref{sec:conc}.

\section{Homomorphism order of relational structures}
\label{sec:horse}

\subsection{Relational structures}
\label{ss:rs}

A \emph{type}~$\Delta$ is a sequence $(\delta_i: i\in I)$ of
positive integers; $I$~is a finite set of indices. A \emph{relational
structure}~$A$ of type $\Delta$ is a pair $(X, (R_i: i\in I))$, where
$X$~is a finite nonempty set and $R_i\subseteq X^{\delta_i}$; that
is, $R_i$ is a $\delta_i$-ary relation on~$X$. We often refer to a
relational structure of type~$\Delta$ as a \emph{\Ds}. In this paper,
we do not consider unary relations; so we assume that $\delta_i\ge 2$
for all $i\in I$.

If $A=(X,(R_i:i\in I))$, the \emph{base set}~$X$ is denoted
by~$\bs{A}$ and the relation~$R_i$ by~$R_i(A)$. The elements of the base
set are called \emph{vertices} and the elements of the relations~$R_i$
are called \emph{edges}; this terminology is motivated by the fact that
relational structures of type $\Delta=(2)$ are digraphs.  To distinguish
between various relations of a \Ds\ we speak about \emph{kinds of edges}
(so the elements of~$R_i(A)$ are referred to as the \emph{edges of the
$i$th kind}).

Let $A$ and $A'$ be two relational structures of the same type~$\Delta$. A
mapping $f: \bs A\to\bs{A'}$ is a \emph{homomorphism} from~$A$ to~$A'$
if for every $i\in I$ and for every $u_1, u_2, \dotsc, u_{\delta_i}\in
\bs A$ the following implication holds:
\[(u_1, u_2, \dotsc, u_{\delta_i})\in R_i(A) \quad\Rightarrow\quad
(f(u_1), f(u_2), \dotsc, f(u_{\delta_i}))\in R_i(A').\]
An \emph{endomorphism} is a homomorphism from a \Ds\ to itself.

If there exists a homomorphism from~$A$ to~$A'$, we say that $A$~is
\emph{homomorphic} to~$A'$ and write $A\le A'$; otherwise we write
$A\nleq A'$. If $A$~is homomorphic to~$A'$ and at the same time $A'$~is
homomorphic to~$A$, we say that $A$ and $A'$ are \emph{homomorphically
equivalent} and write $A\heq A'$.  If on the other hand there exists
no homomorphism from~$A$ to~$A'$ and no homomorphism from~$A'$ to~$A$,
we say that $A$ and~$A'$ are \emph{incomparable} and write $A\inc A'$.

A \Ds~$C$ is a \emph{core} if it is not homomorphic to any proper
substructure of itself.  Equivalently, $C$~is a core if every endomorphism
of~$C$ is an automorphism. It is well-known (consult~\cite{HelNes:GrH})
that every \Ds~$A$ is homomorphically equivalent up to isomorphism to
exactly one core~$C$; then $C$~is called \emph{the core} of~$A$.
The class of all \Ds s which are cores is denoted by~$\CD$.

\subsection{Homomorphism order}

Let $\Delta$ be a fixed type.  The relation $\le$ of being homomorphic
is reflexive, as the identity mapping is a homomorphism from a \Ds\ to
itself, and it is transitive, since the composition of two homomorphisms
is a homomorphism.  Thus $\le$~is a preorder on the class of all \Ds s.

This preorder induces a partial order on $\heq$-equivalence classes, which
is naturally equivalent to the partial order $\le$ on the class~$\CD$
of all core \Ds s (taken up to isomorphism). This order is called the
\emph{homomorphism order}.

Note that $\CD$~is a distributive lattice: The supremum of two
structures~$A,B$ is their disjoint union~${A+B}$ and the infimum is
the categorical product~${A\times B}$ (a precise definition is found
e.g. in~\cite{FNT:GenDu,HelNes:GrH}).\footnote{Strictly we should say that the
supremum is \emph{the core of} the disjoint union; similarly for
infimum. We allow ourselves the concession to be a little imprecise,
which we expect to make the exposition clearer rather than more confused.}

\subsection{Dualities and gaps}
\label{ss:dual}

In this section we sum up the results of~\cite{FNT:GenDu,NesTar:Dual}. We
give the definition and the characterisation of homomorphism dualities
and describe their connection to gaps in the homomorphism order.

Let $\F$ and~$\D$ be two finite sets of core \Ds s such that no
homomorphisms exist among the structures in~$\F$ and among the structures
in~$\D$. We say that $(\F,\D)$ is a \emph{finite homomorphism duality}
(often just a \emph{finite duality}) if for every \Ds~$A$ there exists
$F\in\F$ such that $F \le A$ if and only if for all $D\in\D$ we have
$A\notto D$. In the special case that $|\F|=|\D|=1$, if $(\{F\},\{D\})$
is a duality pair, that is if $\{A: F\le A\}=\{A: A\notto D\}$, the pair
$(F,D)$ is a \emph{duality pair}.

For the full description of finite dualities we need some more notions.
The \emph{incidence graph}\index{incidence graph}~$\Inc(A)$ of a \Ds~$A$
is the bipartite multigraph
$(V_1\cup V_2, E)$ with parts $V_1=\bs A$ and
\[V_2=\Block(A):=\bigl\{(i,(a_1,\dots,a_{\delta_i})) : i\in I,\
(a_1,\dots,a_{\delta_i})\in R_i(A)\bigr\},\]
and one edge between $a$ and $(i,(a_1,\dots,a_{\delta_i}))$
for each occurrence of~$a$ as some $a_j$ in an edge
$(a_1,\dots,a_{\delta_i})\in R_i(A)$.

A \Ds~$A$ is \emph{connected} if its incidence graph $\Inc(A)$~is
connected; it is a \Dt\ if $\Inc(A)$~is a tree; and it is a \Df\
if $\Inc(A)$ is a \Df. A \emph{component} of a \Ds\ is its maximal
connected substructure. Note that a \Ds\ is a \Df\ if and only if each
of its components is a \Dt.

Let us now give the characterisation of finite dualities.

\begin{theorem}[\cite{FNT:GenDu,NesTar:Dual}]
If $(F,D)$ is a duality pair, then $F$~is a \Dt. Conversely, if $F$~is
a \Dt, there exists a unique \Ds~$D$ (\emph{the dual of~$F$}) such that
$(F,D)$~is a duality pair.

If $(\F,\{D\})$ is a finite duality, then all elements of~$\F$ are \Dt s
and $D$~is the product of their duals. If $(\F,\D)$~is a finite duality,
then all elements of~$\F$ are \Df s and each element of~$\D$ is the
product of duals of some components of elements of~$\F$. In this case,
$\D$~is determined uniquely by~$\F$.
\qed\end{theorem}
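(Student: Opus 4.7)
The plan is to prove the four assertions in turn, with later parts building on earlier ones.

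For the necessity that $F$ be a \Dt\ in a duality pair $(F,D)$: suppose first that $F = F_1 + F_2$ with both $F_i$ nonempty and $F$ a core, so that $F_1 \inc F_2$. Since $F\le F$ we have $F\notto D$ by the duality. On the other hand $F\notto F_1$ (because $F_2\notto F_1$) and symmetrically $F\notto F_2$, so the duality gives $F_1\le D$ and $F_2\le D$. But then $F = F_1 + F_2 \le D$ since $+$ is the join in~$\CD$, a contradiction. Thus $F$ is connected. To exclude cycles in $\Inc(F)$ one appeals to a high-girth construction: for every finite~$D$ one builds a connected \Ds~$A$ whose incidence graph has girth strictly greater than the number of edges of~$F$ and with $A\notto D$; by the duality $F\le A$, but a homomorphism from a structure whose incidence graph contains a short cycle into one of much larger incidence-girth must fold that cycle, and via an appropriate twisted-product or shift construction one arranges~$A$ so that this folding is impossible. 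This cycle-exclusion step is the main technical obstacle in this direction.

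For the converse, given a \Dt~$F$ one constructs the dual $D_F$ explicitly. Its vertex set consists of ``extendable partial configurations'' that witness the absence of a homomorphism $F\to\,\cdot\,$, with relations defined by local compatibility. Two things are then verified: $F\notto D_F$ (whence $F\le A$ implies $A\notto D_F$ by transitivity), and whenever $F\notto A$ there is a homomorphism $A\to D_F$ built vertex by vertex from local failure-witnesses. The fact that $\Inc(F)$ is a tree is used essentially here, as it guarantees that no global incompatibility can arise when gluing the local witnesses along a cycle. Uniqueness of the dual (up to $\heq$) is automatic: any two duals have the same downset in~$\CD$ and so are equivalent in the homomorphism order, hence isomorphic as cores.

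For $(\F,\{D\})$ with $|\D|=1$: rewriting the duality as $A\notto D \iff \exists F\in\F\ F\le A$ and applying the duality-pair case to each~$F$ in isolation (using the categorical product as a common witness) forces every $F\in\F$ to be a \Dt, and the categorical-product description of meet in~$\CD$ then yields $D\heq \prod_{F\in\F} D_F$. For the fully general case $(\F,\D)$: the central lemma is that a connected \Ds~$X$ is join-irreducible, i.e. $X\le D_1+D_2$ iff $X\le D_1$ or $X\le D_2$. Applying this componentwise forces each $F\in\F$ to be a disjoint union of \Dt s, i.e.\ a \Df, and matches every $D\in\D$ to the product of duals of a selection of components of elements of~$\F$; uniqueness of~$\D$ given~$\F$ follows from uniqueness in the duality-pair case. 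The principal obstacle throughout is the explicit construction and correctness proof of $D_F$ for a single \Dt~$F$; the remaining assertions reduce to lattice-theoretic bookkeeping once this is established.
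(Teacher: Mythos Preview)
The paper does not give a proof of this theorem at all: it is stated with a citation to \cite{FNT:GenDu,NesTar:Dual} and closed immediately with \qed. So there is no ``paper's own proof'' to compare against; the theorem is quoted as background.

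Your sketch is broadly in line with the arguments in those cited sources. The connectedness argument for~$F$ is correct as written. The cycle-exclusion step is the right idea (high-girth structures not mapping to~$D$), though your phrasing ``via an appropriate twisted-product or shift construction one arranges~$A$ so that this folding is impossible'' hides the actual work; in the cited papers this is done via sparse incomparability or an explicit probabilistic/explicit high-girth construction, and the contradiction comes simply from the fact that a homomorphic image of a cycle in $\Inc(F)$ must contain a cycle of no greater length in $\Inc(A)$. The construction of~$D_F$ you describe is a fair gloss on the tree-duality construction of Ne\v set\v ril--Tardif, though the actual construction there is rather more concrete than ``extendable partial configurations''. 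For the $(\F,\{D\})$ case, your sentence ``applying the duality-pair case to each~$F$ in isolation'' is too quick: one cannot directly peel off a single~$F_i$ as a duality pair, and the cited proofs instead rerun the connectedness and high-girth arguments while simultaneously avoiding the other~$F_j$'s via sparse incomparability. The lattice bookkeeping for the general $(\F,\D)$ case is as you describe.
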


In a poset~$\P$, a \emph{gap} is a pair $(B,T)$ of elements of~$\P$
such that $B<T$ and whenever $B\le X\le T$, then $X=B$ or $X=T$. In
the homomorphism order of undirected graphs, there is exactly one gap:
$(K_1,K_2)$. For digraphs and general \Ds s, the gaps are as follows:

\begin{theorem}[\cite{NesTar:Dual}]
A pair $(B,T)$ is a gap in~$\CD$ if and only if there exists a \Dt~$F$
with dual~$D$ such that $T\times D\le B\le D$ and $T=B+F$. If $T$~is
connected, then there exists a \Ds~$B$ such that $(B,T)$ is a gap if
and only if $T$~is a \Dt. Then $B=T\times D$.
\qed\end{theorem}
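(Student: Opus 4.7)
The plan is to handle each biconditional and each direction separately. The \emph{if} direction of the first biconditional is a direct verification. Suppose $F$ is a $\Delta$-tree with dual~$D$ satisfying $T\times D\le B\le D$ and $T\heq B+F$. Strictness $B<T$ follows because $B\heq B+F$ would force $F\le B\le D$, contradicting the duality $(F,D)$. For any core~$X$ with $B\le X\le T$, the duality of $F$ splits into two cases: if $F\le X$, then $B+F\le X$, so $T\le X$ and $X\heq T$; if $F\not\le X$, then $X\le D$, so $X\le T\times D\le B$ and $X\heq B$.

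For the \emph{only if} direction of the first biconditional, assume $(B,T)$ is a gap. The work divides into (A)~producing a $\Delta$-tree $F$ with $F\le T$ and $F\not\le B$, and (B)~deducing the stated inequalities from such~$F$ with dual~$D$. Part~(B) is mechanical: duality of~$F$ yields $B\le D$ (from $F\not\le B$); the chain $B\le B+F\le T$ together with $B+F\not\heq B$ (since $F\not\le B$) forces $B+F\heq T$ by the gap; and the chain $B\le T\times D\le T$ (using $B\le T$ and $B\le D$) together with the observation that $T\times D\heq T$ would yield $T\le D$ and hence $F\le D$, contradicting duality, forces $T\times D\heq B$ by the gap.

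Part~(A) is the crux and where the main technical difficulty lies. I would start with a connected component $F^*$ of~$T$ satisfying $F^*\not\le B$, which exists because $T\not\le B$. If $F^*$ is itself a $\Delta$-tree, set $F:=F^*$; otherwise $\Inc(F^*)$ contains a cycle, and I would follow the Nešetřil--Tardif approach. A key intermediate observation: for every proper substructure $F'<F^*$ (for instance, $F^*$ with one $\Delta$-edge removed, which is strictly smaller because $F^*$ inherits being a core from~$T$), the gap forces $F'\le B$, since otherwise $B+F'\heq T$ would, via connected-component dominance applied to $F^*\le T\le B+F'$, yield $F^*\le F'$, contradicting $F'<F^*$. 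The remaining task is to convert this rigidity, together with the cycle in $\Inc(F^*)$ and the finiteness of~$B$, into an explicit $\Delta$-tree $F\le T$ not homomorphic to~$B$, for instance via a high-girth finite cover of~$F^*$ (a truncation of its universal cover).

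For the second biconditional, the \emph{if} direction follows from the first biconditional with $F:=T$ and $B:=T\times D$: the condition $T\times D\le B\le D$ reduces to $T\times D\le D$, and $T\heq B+F$ to $T\heq(T\times D)+T$, both trivial from $T\times D\le T$. Conversely, suppose $T$ is connected and $(B,T)$ is a gap; the first biconditional supplies a $\Delta$-tree $F$ with $T\heq B+F$. Since $T$ is a connected core and $F$ is connected with $F\not\le B$, for the core of $B+F$ to be connected every component of~$B$ must be absorbed into~$F$, i.e.\ $B\le F$; hence $T\heq F$. The core of a $\Delta$-tree is itself a $\Delta$-tree (as a retract of a tree it is connected, and its incidence graph is a connected subgraph of the tree $\Inc(F)$), so $T$ is a $\Delta$-tree. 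Finally, $B=T\times D$ is exactly $T\times D\heq B$ from the first biconditional.
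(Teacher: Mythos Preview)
The paper does not prove this theorem; it is quoted from \cite{NesTar:Dual} and closed with a bare \texttt{\textbackslash qed}. So there is no ``paper's own proof'' to compare against, and your plan must be judged on its own.

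Your outline is structurally correct and matches the architecture of the Nešetřil--Tardif argument: the \emph{if} direction of the first biconditional, Part~(B), and both directions of the second biconditional are all fine as written. You also correctly isolate Part~(A)---producing a $\Delta$-tree $F\le T$ with $F\not\le B$---as the real content, and your rigidity observation (every proper substructure of the critical component~$F^\ast$ maps to~$B$) is exactly the right lever.

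The gap is in how you propose to finish Part~(A). Truncations $\tilde F_n$ of the universal cover of~$F^\ast$ are $\Delta$-trees mapping to~$F^\ast\le T$, but nothing you have said forces some $\tilde F_n\not\le B$. The rigidity observation does not help here: the covering map $\tilde F_n\to F^\ast$ is typically \emph{surjective} once $n$ is moderately large, so $\tilde F_n$ does not factor through a proper substructure of~$F^\ast$. And from ``every finite tree cover maps to~$B$'' you cannot conclude $F^\ast\le B$: distinct structures can share the same set of tree homomorphs (already for undirected graphs, all non-bipartite graphs do). Compactness only gives a homomorphism from the infinite universal cover to~$B$, which is the wrong direction. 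The original argument in \cite{NesTar:Dual} does not go via tree covers at all; it proves density below a connected non-tree~$F^\ast$ directly, by an explicit product/high-girth construction that manufactures some $C$ with $B<C<T$, contradicting the gap. Your plan would become complete if you replaced the cover sketch by that density step (or supplied a genuine reason why a large enough truncation must avoid~$B$).
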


\section{Antichains, looseness and cut-points}
\label{sec:ext}

In~\cite{DufErdNes:Antichains-in-the-homomorphism}, a number of properties
of the homomorphism order of graphs and digraphs were investigated.

First we introduce some of the notions defined there. Let $(\P,{\le})$
be a poset.  An element $Y\in\P$ is a \emph{cut-point} if there exist
$X,Z\in\P$ such that $X<Y<Z$ and the interval $[X,Z]=[X,Y]\cup[Y,Z]$. A
subset $\A\subseteq\P$ is \emph{cut-free} if there are no $Y\in\A$ and
no $X,Z\in\P$ such that $X<Y<Z$ and $\A\cap[X,Z]=\A\cap([X,Y]\cup[Y,Z])$.

We write $\A\inc X$ to mean that the element~$X$ is incomparable with
each element of~$\A$.

A subset $\A\subseteq\P$ is an \emph{upward loose kernel} in~$\P$ if
for every finite $\S\subseteq\A$ and every $X\in\P\setminus\ups\S$ there
exists $Y\in\A$ such that $X<Y$ and $\A\inc Y$. Analogously $\A$~is an
\emph{downward loose kernel} in~$\P$ if for every finite $\S\subseteq\A$
and every $X\in\P\setminus\downs\S$ there exists $Y\in\A$ such that $X>Y$
and $\S\inc Y$.

A subset $\A\subseteq\P$ \emph{has no finite maximal antichains} in~$\P$
if there is no finite subset $\S\subseteq\A$ that is a maximal antichain
in~$\P$.  Furthermore, $\A$~has the \emph{finite antichain extension
property} in~$\P$ if for every finite antichain $\S\subseteq\A$ and
every $X\in\P\setminus\S$ there exists $Y\in\A$ such that $\S\inc Y$
but $Y$~is comparable with~$X$. Observe that if $\A$ is both an upward
and a downward loose kernel in~$\P$, then $\A$~has the finite antichain
extension property in~$\P$.

In short, the importance of these notions is this: Finite antichains
in subsets with the finite antichain extension property extend to
(infinite) splitting maximal antichains. And non-maximal antichains in
loose kernels extend to (infinite) non-splitting maximal antichains. For
details see~\cite{DufErdNes:Antichains-in-the-homomorphism}.

Now let $\GG$ be the homomorphism poset of undirected graphs and let
$\GG'=\GG\setminus\{K_1,K_2\}$ be the set of non-bipartite cores. Then
we have:

\begin{theorem}[\cite{DufErdNes:Antichains-in-the-homomorphism}]
The subset $\GG'$ is both an upward loose kernel and a downward loose
kernel in~$\GG$. Hence $\GG'$~has the finite antichain extension property
in~$\GG$ and it has no finite maximal antichains in~$\GG$. Moreover,
$\GG'$~is cut-free in~$\GG$.
\qed\end{theorem}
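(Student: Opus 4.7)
The argument hinges on Erd\H{o}s's theorem on graphs of arbitrarily large girth and chromatic number, together with the elementary observation that in a non-bipartite core no component can equal $K_1$ or $K_2$ (such a component would fold into the rest, contradicting corefulness). I would prove the two looseness claims directly; the finite antichain extension property then follows from the observation recorded in the excerpt, and the absence of finite maximal antichains and cut-freeness come out as small variants of the same constructions.

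\textbf{Upward looseness.} Given finite $\S\subseteq\GG'$ and $X\in\GG\setminus\ups\S$, use Erd\H{o}s to choose a connected non-bipartite core $Z$ whose odd girth exceeds $|V(X)|$ and every $|V(S)|$ with $S\in\S$, and whose chromatic number exceeds $\chi(X)$ and every $\chi(S)$. Let $Y$ be the core of $X+Z$. Then $Y\in\GG'$; the chain $X\le X+Z\heq Y$ together with $\chi(Y)\ge\chi(Z)>\chi(X)$ yields $X<Y$. For each $S\in\S$, $\chi(Y)>\chi(S)$ forbids $Y\le S$; conversely, since $S$ is a non-bipartite core, each of its components contains an odd cycle of length at most $|V(S)|$, strictly below the odd girth of $Z$, so no component of $S$ maps into $Z$. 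Hence any homomorphism $S\to X+Z$ factors through $X$, forcing $S\le X$ and contradicting $X\notin\ups\S$.

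\textbf{Downward looseness.} This is the dual and the principal difficulty, since the chromatic-number argument is not self-dual. First, $X\in\GG\setminus\downs\S$ is necessarily non-bipartite (otherwise $X\le K_2\le S$ for every $S$). I would invoke the Ne\v{s}et\v{r}il--R\"odl Sparse Incomparability Lemma: since $X\not\le H$ for every $H\in\S\cup\{K_2\}$, for any girth bound $g$ there is a graph $G$ with $G\le X$, girth of $G$ at least $g$, and $G\not\le H$ for every $H\in\S\cup\{K_2\}$. Take $g$ strictly above the odd girths of $X$ and of every $S\in\S$, and let $Y$ be the core of $G$. Then $Y\in\GG'$ ($Y$ is non-bipartite because $Y\not\le K_2$), $Y<X$ ($X\not\le Y$ since the odd girth of $Y$ exceeds that of $X$), and $Y\inc S$ for every $S\in\S$ ($Y\not\le S$ from the lemma, $S\not\le Y$ from odd girth).

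\textbf{Derived statements.} The finite antichain extension property is immediate from the two loosenesses and the observation in the excerpt. For no finite maximal antichains: any finite antichain $\S\subseteq\GG'$ excludes $K_1$ from $\ups\S$ (nothing in $\GG'$ maps into $K_1$), so upward looseness applied with $X=K_1$ produces $Y\in\GG'$ incomparable with every $S\in\S$, and $\S\cup\{Y\}$ is a strictly larger antichain in $\GG$. Cut-freeness follows from a localised version of the upward construction: given $Y\in\GG'$ and $X<Y<Z$ in $\GG$, apply sparse incomparability to $Z$ and the forbidden family $\{Y,K_2\}$ (noting $Z\not\le Y$ and $Z\not\le K_2$) to obtain $Z'\le Z$ of odd girth exceeding that of $Y$, with $Z'\not\le Y$ and $Z'$ non-bipartite; then $Y\not\le Z'$ by girth, and the core of $X+Z'$ lies in $\GG'\cap[X,Z]$ and is incomparable with $Y$, so $Y$ does not cut $[X,Z]$.
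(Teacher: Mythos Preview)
The paper does not actually prove this theorem; it is quoted from Duffus--Erd\H{o}s--Ne\v{s}et\v{r}il--Soukup and closed with \qed. Your argument follows exactly the expected route---Erd\H{o}s's high-girth/high-chromatic graphs for upward looseness, the Ne\v{s}et\v{r}il--R\"odl sparse incomparability lemma for downward looseness and for cut-freeness---and this is precisely what the present paper names as ``the main tool'' when it discusses the directed analogue immediately afterwards. The two looseness proofs and the derivations of the finite antichain extension property and of the absence of finite maximal antichains are correct as written.

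There is one small technical slip in the cut-freeness paragraph. You take $Z'$ with odd girth exceeding that of~$Y$ and then claim the core of $X+Z'$ is incomparable with~$Y$. The direction $X+Z'\not\le Y$ is fine (since $Z'\not\le Y$). For $Y\not\le X+Z'$, however, you need that \emph{every component} of~$Y$ fails to embed in~$Z'$; but the odd girth of~$Y$ is the \emph{minimum} over its components, so ``$og(Z')>og(Y)$'' does not rule out a component of~$Y$ with large odd girth mapping into~$Z'$, and then a splitting of the components of~$Y$ between $X$ and~$Z'$ could in principle give $Y\le X+Z'$. The fix is exactly what you already did in the upward-looseness step: choose the girth bound $g>|V(Y)|$, so that every component of~$Y$ contains an odd cycle shorter than the girth of~$Z'$ and hence cannot map into~$Z'$. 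With that adjustment the argument is complete.
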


In addition, $\GG'$ is dense, that is it contains no gaps.

As usual, the situation is more complex for digraphs or \Ds s. To
begin with, it is not entirely clear what subset should play the role
of~$\GG'$. Let us have a look at some conditions, which are equivalent
for undirected graphs.

\begin{obs}
\label{obs:cyc}
Consider $\GG$, the homomorphism order of undirected graphs.
The subset $\GG'$ of\/ $\GG$ is each of the following:\par
(1) the set of all cores that are not homomorphic to any tree;\par
(2) the set of all cores that have no component homomorphic to a tree;\par
(3) the set of all cores that contain a cycle;\par
(4) the set of all cores that contain a cycle in each connected component;\par
(5) the set of all cores that contain an odd cycle;\par
(6) the set of all cores that contain an odd cycle in each connected component.
\qed\end{obs}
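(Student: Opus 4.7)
The plan is to verify that each of (1)--(6) coincides with $\GG'$, leaning on two elementary facts. First, I would observe that a graph admits a homomorphism to some tree if and only if it is bipartite: every tree is bipartite, so a graph homomorphic to a tree is bipartite; conversely every bipartite graph maps into $K_2$ (or into $K_1$ if it has no edges), both of which are trees. Second, the only bipartite cores in $\GG$ are $K_1$ and $K_2$, because any bipartite graph is $\heq$-equivalent to one of these, and each $\heq$-class contains a unique core up to isomorphism.

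With these in hand, (1) and (5) become immediate restatements of bipartiteness. For~(3), a core containing no cycle is a forest, hence bipartite, hence equals $K_1$ or $K_2$ by the second fact; conversely a non-bipartite graph always contains an odd cycle, which is in particular a cycle.

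For (2), (4) and (6) I would first establish the auxiliary claim that in any core $G$ of $\GG$, distinct components are pairwise incomparable. If components $G_1\ne G_2$ of $G$ satisfied $G_1\le G_2$, then mapping $G_1$ into $G_2$ and fixing the other components produces a homomorphism from $G$ to the proper substructure obtained by deleting $G_1$, contradicting that $G$ is a core. Combined with the second fact, this rules out any bipartite component in a non-bipartite core: a bipartite component maps into $K_1$ or $K_2$, which in turn map into any non-bipartite component (which contains an edge), forcing the forbidden comparability. Hence every component of a non-bipartite core is itself non-bipartite, which gives the forward direction of (2), (4) and~(6). Conversely, (6) trivially forces $G$ to be non-bipartite; (2) forces every component non-bipartite and hence $G$ non-bipartite; and a core $G$ satisfying (4) while being bipartite would by the second fact be $\heq K_2$ and therefore equal $K_2$, contradicting that every component of $G$ contains a cycle.

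The main obstacle I foresee is the component lemma -- that distinct components of a core are pairwise incomparable -- but it is a one-line consequence of the definition of core. Everything else is routine bookkeeping among the notions \textit{bipartite}, \textit{no odd cycle}, \textit{homomorphic to a tree}, and the classification of bipartite cores as $K_1$ and $K_2$.
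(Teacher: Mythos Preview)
Your argument is correct. The paper itself offers no proof of this observation at all: it is stated and immediately closed with a \qed, being regarded as a routine fact about undirected graphs. Your unpacking via the two elementary facts (homomorphic to a tree $\Leftrightarrow$ bipartite; the only bipartite cores are $K_1$ and $K_2$) together with the component-incomparability lemma is precisely the kind of verification the paper leaves implicit, so there is no alternative approach to compare against. One cosmetic remark: in your treatment of~(3) both sentences prove the same inclusion $\GG'\subseteq(3)$; the reverse inclusion is the (unstated but trivial) fact that $K_1$ and $K_2$ contain no cycle.
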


In the homomorphism order of digraphs~$\DD$, though, the situation
is contrasting: no two of the sets defined by the conditions (1)--(6)
coincide. This fact motivates separate study of these subsets.

Hence we define subsets $\DD_1,\DD_2,\dotsc,\DD_6$ of~$\DD$ so
that we set $\DD_k$ to be the set satisfying condition~($k$) of
Observation~\ref{obs:cyc}. For instance, $\DD_1$~is the set of all core
digraphs that are not homomorphic to an orientation of a tree.

Analogously, in the homomorphism order of \Ds s, for $k=1,2,3,4$,
let $\CD_k$ be the subset of~$\CD$ satisfying condition~($k$) of
Observation~\ref{obs:cyc}.

Some properties of subsets of~$\DD$ defined in this way were examined
in~\cite{DufErdNes:Antichains-in-the-homomorphism}.

\begin{theorem}[\cite{DufErdNes:Antichains-in-the-homomorphism}]
\label{thm:dd56}
The subset $\DD_5$ is a downward loose kernel in~$\DD$. The subset
$\DD_6$~is an upward loose kernel in~$\DD$; $\DD_6$ has no finite
maximal antichains; but $\DD_6$~does not have the finite antichain
extension property.
\qed\end{theorem}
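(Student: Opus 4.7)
\emph{Plan.}
The three looseness-style statements all rely on an Erdős-style probabilistic construction adapted to digraphs: for any parameters $k,g$ one can produce a digraph $H$ with chromatic number exceeding $k$, underlying odd girth exceeding $g$, and containing a prescribed short odd cycle, and even with every component containing an odd cycle if desired. This follows as in Erdős's original argument by taking a random digraph of suitable density and altering to remove short odd cycles while preserving high chromatic number.

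For the downward loose kernel property of $\DD_5$, given a finite $\S\subseteq\DD_5$ and $X\in\DD\setminus\downs\S$, the proposed construction is $Y:=X\times H$ with $H$ tuned so that $\chi(H)>\max(\chi(X),\max_{S\in\S}\chi(S))$ and the odd girth of $H$ exceeds every $|\bs S|$. The projection gives $Y\le X$ and the chromatic bound gives $X\not\le Y$, hence $Y<X$; meanwhile $Y$ inherits an odd cycle of $H$ across a fixed $X$-coordinate, so $Y\in\DD_5$. Incomparability with each $S\in\S$ follows in one direction from $\chi(H)>\chi(S)$ (killing $S\to Y$) and in the other from the odd-girth bound of $H$ combined with the hypothesis $X\not\le S$ (any $Y\to S$ would either detect a short odd cycle in $S$ or force a factorisation through $X$ contradicting $X\not\le S$). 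The upward loose kernel property of $\DD_6$, and the corresponding absence of finite maximal antichains in $\DD_6$, is proved by the symmetric construction: given $X\in\DD\setminus\ups\S$, one extends $X$ by attaching suitable auxiliary components in $\DD_6$ with tuned chromatic number and odd girth so that $Y>X$ lies in $\DD_6$ and is incomparable with $\S$.

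The failure of the finite antichain extension property for $\DD_6$ is the main obstacle, since it demands a concrete counterexample rather than a uniform construction: a finite antichain $\S\subseteq\DD_6$ and a digraph $X\in\DD\setminus\S$ such that every $Y\in\DD_6$ comparable with $X$ is automatically comparable with some $S\in\S$. The natural candidates are disconnected $X$'s with at least one acyclic (``tree-like'') component, whose structure clashes with the ``odd cycle in every component'' rigidity of $\DD_6$; if $\S$ is chosen so that the only way for $Y\in\DD_6$ to accommodate the tree-like component of $X$ is to contain (or be contained in) a member of $\S$, extension is blocked. Identifying a specific such $(X,\S)$ and verifying that it rules out \emph{every} candidate $Y$ is the delicate part of the theorem; the first three parts, by contrast, are routine consequences of the probabilistic construction.
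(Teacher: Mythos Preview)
The paper does not prove this theorem; it is stated without proof as a citation of~\cite{DufErdNes:Antichains-in-the-homomorphism}. The only hint offered is the sentence immediately following it: ``The main tool for proving this theorem is \emph{sparse incomparability}'', meaning the existence of digraphs of large girth that avoid homomorphisms both \emph{to} and \emph{from} prescribed targets. So there is nothing detailed here to compare your argument against.

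Your product construction $Y:=X\times H$, however, is not sparse incomparability, and it fails at several points.
\begin{itemize}
\item The chromatic argument points the wrong way. A large $\chi(H)$ obstructs maps \emph{from}~$H$, not \emph{into}~$H$. Since $X\to X\times H$ holds iff $X\to H$, and $S\to X\times H$ forces $S\to H$, making $\chi(H)$ large blocks neither. It is the large odd girth of~$H$ (together with the short odd cycle in~$S\in\S\subseteq\DD_5$) that kills $S\to H$, and it kills $X\to H$ only when $X$ itself has a short odd cycle --- which you cannot assume for an arbitrary $X\in\DD\setminus\downs\S$.
\item The claim that $Y$ ``inherits an odd cycle of~$H$ across a fixed $X$-coordinate'' is false: the fibre $\{x\}\times H$ carries no edges of $X\times H$ unless $x$ has a loop. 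More generally, any odd cycle in $X\times H$ projects to an odd closed walk in~$X$; hence if $X$ has none, then $Y\notin\DD_5$.
\item The justification of $Y\nleq S$ via a ``factorisation through~$X$'' is invalid: a homomorphism $X\times H\to S$ has no reason to factor through either projection, so $X\nleq S$ does not by itself prevent $X\times H\le S$.
\end{itemize}
The sparse-incomparability lemma bypasses all of this by directly manufacturing a $Y\le X$ of large girth with $Y\nleq S$ for each $S\in\S$; the remaining work in the cited paper is to arrange that such a $Y$ can be taken in~$\DD_5$ (respectively, for the dual statement, in~$\DD_6$). Your outline for the failure of the finite antichain extension property is a reasonable strategy, but, as you yourself note, it remains only a plan until a concrete pair $(\S,X)$ is exhibited and verified.
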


The main tool for proving this theorem is \emph{sparse
incomparability}. This essentially guarantees the existence of digraphs
(or \Ds s) that are locally trees (they have large girth) but do not admit
homomorphisms to and from some prescribed graphs. Using similar methods
as in~\cite{DufErdNes:Antichains-in-the-homomorphism}, and exploiting
the characterisation of finite maximal antichains for digraphs, given
in~\cite{FNT:WG06}, we can show the following.

\begin{theorem}
\label{thm:cd2dd34}
Neither $\DD_3$ nor $\DD_4$ has finite maximal antichains in~$\DD$. The
subsets $\CD_1$ and $\CD_2$ have no finite maximal antichains. The
subset~$\CD_1$ is a downward loose kernel in~$\CD$, and $\CD_2$~is an
upward loose kernel in~$\CD$; but neither $\CD_1$ nor $\CD_2$ has the
finite antichain extension property.
\qed\end{theorem}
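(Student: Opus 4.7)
My plan is to reduce every assertion to the \emph{sparse incomparability} lemma for $\Delta$-structures, in the same spirit as the proof of Theorem~\ref{thm:dd56} in~\cite{DufErdNes:Antichains-in-the-homomorphism}. I will use two versions. The \emph{absolute form} asserts that for every finite $\mathcal{H}\subseteq\CD$ and all integers $g,k$ there exists a connected $Y\in\CD$ with incidence-graph girth at least $g$, chromatic number at least $k$, and $Y$ incomparable with every $H\in\mathcal{H}$. The \emph{relative form} asserts that if additionally $X\in\CD$ satisfies $X\not\le H$ for each $H\in\mathcal{H}$, then $Y$ can moreover be required to satisfy $Y\le X$ and $X\not\le Y$. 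Both will be obtained by adapting the digraph constructions of~\cite{DufErdNes:Antichains-in-the-homomorphism} to $\Delta$-structures via the incidence graph and the duality characterisation of Section~\ref{ss:dual}; this lift is the only genuine technical work, and its verification will be the principal obstacle.

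For the four ``no finite maximal antichain'' claims, take a finite antichain $\S$ in $\DD_3$, $\DD_4$, $\CD_1$, or $\CD_2$ and apply the absolute form with $k=3$ and $g$ larger than the size of every element of $\S$. The resulting $Y$ is connected, has $\chi(Y)\ge 3$, and is incomparable with every $S\in\S$. Every $\Delta$-tree $T$ is $2$-colourable: root its incidence tree at any vertex so that vertices of $T$ appear at even depths, and colour each vertex by the parity of that depth divided by two; then every block contains vertices of both colours, since its parent vertex and its child vertices lie at depths differing by two. Hence $\chi(T)\le 2<\chi(Y)$, so $Y\not\le T$ for any $\Delta$-tree $T$. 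This places $Y$ in $\CD_1$ and, by connectedness, in $\CD_2$; in the digraph case $\chi(Y)\ge 3$ additionally supplies an odd underlying cycle, so $Y\in\DD_3\cap\DD_4$. Thus $\S\cup\{Y\}$ properly extends $\S$ in the relevant set.

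The loose-kernel claims follow analogously from the relative form. For $\CD_1$ downward, given $\S\subseteq\CD_1$ and $X\in\CD\setminus\downs\S$, I apply the relative form with $k=3$ to the pair $(X,\S)$ and obtain $Y$ with $Y\le X$, $X\not\le Y$ (so $Y<X$), $Y$ incomparable with every $S\in\S$, and $\chi(Y)\ge 3$; by the previous paragraph $Y\in\CD_1$, providing the required witness. For $\CD_2$ upward, I run the dual version: given $X\in\CD\setminus\ups\S$, produce a connected $Y\ge X$ with $X\not\ge Y$, $\chi(Y)\ge 3$, and $Y$ incomparable with every $S\in\S$; such a $Y$ lies in $\CD_2$.

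Finally, the failure of the finite antichain extension property is witnessed by the singleton $\S=\{\top\}$, where $\top\in\CD$ is the single vertex $v$ with $(v,\dots,v)\in R_i(\top)$ for every $i\in I$. No $\Delta$-tree contains such a loop, because $(v,\dots,v)$ would produce $\delta_i\ge 2$ parallel edges between $v$ and the block $(i,(v,\dots,v))$ in the incidence graph, contradicting its being a tree; hence $\top\not\le T$ for every $\Delta$-tree $T$, so $\top\in\CD_1\cap\CD_2$. Since $Y\le\top$ for every $Y\in\CD$ (collapse every vertex to $v$), no $Y$ is incomparable with $\top$. Therefore for any $X\in\CD\setminus\{\top\}$ no witness for the finite antichain extension property can be found in either $\CD_1$ or $\CD_2$, and both fail the property.
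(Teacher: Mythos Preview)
Your absolute--form argument for the ``no finite maximal antichain'' claims and your $\{\top\}$ witness for the failure of the extension property are fine and match the paper's intended approach. The serious problem is with the \emph{relative form} of sparse incomparability as you state it: it is simply false. Chromatic number is monotone under homomorphism, so $Y\le X$ forces $\chi(Y)\le\chi(X)$; you therefore cannot demand both $Y\le X$ and $\chi(Y)\ge k$ unless $\chi(X)\ge k$ to begin with. In your downward--loose--kernel argument for $\CD_1$ you quantify over \emph{all} $X\in\CD\setminus\downs\S$, and such $X$ need not satisfy $\chi(X)\ge3$. Concretely (for $\Delta=(2)$): let $S$ be the oriented $5$-cycle with arcs $1\to2$, $2\to3$, $3\to4$, $4\to5$, $1\to5$. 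Then $S$ is a core, and the undirected cycle $1,2,3,4,5$ has net length~$3$, so $S$ is unbalanced and hence $S\in\DD_1$. The longest directed walk in~$S$ has length~$4$, so $\vec P_5\not\to S$ and $X=\vec P_5\in\DD\setminus\downs\{S\}$. But $\chi(\vec P_5)=2$, so no $Y\le\vec P_5$ has $\chi(Y)\ge3$; your certificate ``$\chi(Y)\ge3$, hence $Y\in\CD_1$'' is unavailable. In fact every $Y\le\vec P_5$ is balanced and thus maps to a tree, so $Y\notin\DD_1$ for \emph{any} such~$Y$. This shows that high chromatic number cannot be the invariant that places $Y$ in $\CD_1$ here, and that the downward claim is genuinely delicate.

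The paper's one-line sketch explicitly says it uses, in addition to the sparse-incomparability methods of~\cite{DufErdNes:Antichains-in-the-homomorphism}, ``the characterisation of finite maximal antichains for digraphs'' from~\cite{FNT:WG06}. That characterisation (every finite maximal antichain arises from a generalised duality whose $\F$-part consists of $\Delta$-forests) yields the ``no finite maximal antichain'' statements for $\DD_3$, $\DD_4$, $\CD_1$, $\CD_2$ in a single stroke---any finite maximal antichain contains a forest, which lies in none of these classes---and is presumably also the ingredient that handles the low-chromatic-number obstructions in the loose-kernel proofs. Your plan omits this ingredient entirely; without it, the downward--loose--kernel argument does not go through.
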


Cut-points are also related to the splitting property of antichains. It
follows from~\cite[Theorem~2.1]{AhlErdGra:A-splitting} as well
as~\cite[Theorem~2.10]{ErdSou:How-to-split} that a finite maximal
antichain splits if it contains no cut-point. Let us look at cut-points
a little closer.

\begin{prop}
\label{prop:cutp}
Let $T$ be a \Dt\ and let $D$ be its dual. Then the \Ds s $T+D$
and~$T\times D$ are cut-points in the homomorphism order~$\CD$.
\end{prop}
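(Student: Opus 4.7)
The key tool is the duality: for every core structure $W\in\CD$, exactly one of $T\le W$ and $W\le D$ holds, since $T\notto D$. The plan is to leverage this dichotomy directly to force any structure in a suitable interval around $T+D$ (respectively $T\times D$) to be comparable with it.

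To show that $T+D$ is a cut-point, I would take $X:=D$ and let $Z$ be any core strictly above $T+D$ (such a $Z$ exists, e.g.\ $T+D+A$ for any core $A$ not homomorphic to $T+D$). We have $D<T+D$: indeed $D\le T+D$ trivially, while $T+D\le D$ would force $T\le D$, contradicting the duality. Now for any core $W$ with $D\le W\le Z$, apply the dichotomy to $W$. Either $T\le W$, and combined with $D\le W$ this gives $T+D\le W$, placing $W$ in $\ups{\{T+D\}}$; or $W\le D$, and together with $D\le W$ this yields $W\heq D$, so $W=D$ as cores and hence $W\le T+D$. Therefore $[D,Z]=[D,T+D]\cup[T+D,Z]$, as required.

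For $T\times D$ the argument is entirely symmetric. I would pick $Z:=T$ and take $X$ to be any core strictly below $T\times D$ (one may take $X=K_1$ as soon as $T\times D$ carries any edge). Strictness of $T\times D<T$ follows because $T\le T\times D$ would imply $T\le D$. For any core $W$ with $X\le W\le T$, duality gives either $W\le D$ — whence $W\le T\times D$ — or $T\le W$, which combined with $W\le T$ forces $W\heq T$, so $W=T\ge T\times D$. Hence $[X,T]=[X,T\times D]\cup[T\times D,T]$.

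The main obstacle is mostly bookkeeping: one must verify that the witnesses $X<T\times D$ and $Z>T+D$ actually exist in $\CD$, i.e.\ that we are not in the degenerate situation where $T\times D$ is the minimum or $T+D$ the maximum of the order. Away from such trivialities the conclusion is immediate, so no substantial new idea is required beyond the duality dichotomy of Section~\ref{ss:dual}.
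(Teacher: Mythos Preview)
Your argument is correct and essentially the same as the paper's: both use the duality dichotomy to show that every $W$ in the interval $[K_1,T]$ (resp.\ $[D,\top]$) is comparable with $T\times D$ (resp.\ $T+D$). The only difference is cosmetic---the paper simply takes $X=K_1$ and $Z=\top$ for the outer endpoints without further comment, whereas you discuss their existence explicitly; and the paper additionally observes that $[T\times D,T]$ is in fact a two-element interval (a gap), which your version does not need.
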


\begin{proof}
Consider the interval~$[K_1, T]$, which is equal to the downset generated
by~$T$. Suppose that $X$~is a \Ds\ such that $X<T$. Then $X\le D$,
because $T\nleq X$. Thus $X\le T\times D$. Hence the interval~$[T\times
D, T]$ contains only its end-points, that is $[T\times D, T]=\{T\times
D, T\}$. Moreover, $[K_1, T\times D]\cup[T\times D, T]=[K_1, T]$,
so $T\times D$~is a cut-point.

Similarly, if $D<X$, then $T+D\le X$. Hence
$[D,T+D]\cup[T+D,\top]=[D,\top]$ and so $T+D$~is a cut-point.
\end{proof}

No cut-free subset can contain cut-points; and there are cycles and odd
cycles in many duals.  Thus $T+D$ is a cut-point belonging to several
of the classes defined above, and hence we have:

\begin{cor}
None of the classes $\CD_1$, $\CD_3$ and $\DD_5$ is cut-free.
\qed\end{cor}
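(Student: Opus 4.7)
The plan is to combine Proposition~\ref{prop:cutp} with a single explicit duality pair. Proposition~\ref{prop:cutp} supplies, for every $\Delta$-tree $T$ with dual $D$, a cut-point $T+D$ of the ambient order; since no cut-free subset can contain any cut-point of the ambient poset (the remark just before the corollary), it will be enough to pick the tree~$T$ so that the dual~$D$ carries enough cyclic structure that $T+D$ lies in each of $\CD_1$, $\CD_3$, and $\DD_5$.

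The clean witness I would use is $T=\vec P_3$, the directed path with three arcs, whose dual is the transitive tournament $T_3$ on three vertices (classical Gallai--Hasse--Roy--Vitaver duality). The underlying graph of $T_3$ is a triangle, so $T_3$ (hence $\vec P_3+T_3$) contains an odd cycle, placing $\vec P_3+T_3$ in $\DD_5$. The incidence graph of $T_3$ is a $6$-cycle, so $T_3$ contains a $\Delta$-cycle in the sense of Observation~\ref{obs:cyc}(3), giving $\vec P_3+T_3\in\CD_3$. Finally, the underlying graph of every $\Delta$-tree is acyclic---a cycle in it would induce a cycle in the incidence graph---and in particular bipartite, so there is no homomorphism from the triangle $T_3$ to any $\Delta$-tree, yielding $\vec P_3+T_3\in\CD_1$.

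Proposition~\ref{prop:cutp} then declares $\vec P_3+T_3$ to be a cut-point (in $\CD$ and, viewed as a digraph, in $\DD$), so each of $\CD_1$, $\CD_3$, $\DD_5$ indeed contains a cut-point and hence is not cut-free. The only thing requiring any thought is picking the duality pair; after that the verifications are elementary inspections of the three-vertex digraph $T_3$. The ``obstacle'', such as it is, is recognising that exactly those three properties (odd cycle in the underlying graph, cycle in the incidence graph, non-homomorphism to any $\Delta$-tree) are all guaranteed by the same small example.
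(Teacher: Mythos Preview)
Your argument is correct and follows exactly the route the paper takes: invoke Proposition~\ref{prop:cutp} to produce a cut-point of the form $T+D$, and then verify that the dual $D$ carries a cycle (for $\CD_3$), is not homomorphic to any $\Delta$-tree (for $\CD_1$), and has an odd cycle in its underlying graph (for $\DD_5$). The paper's own justification is in fact terser than yours---it simply asserts that ``there are cycles and odd cycles in many duals''---whereas you supply the explicit witness $(\vec P_3, T_3)$ and check each membership by hand.

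One small remark: your witness is a digraph, so it settles $\DD_5$ and the cases $\CD_1,\CD_3$ for $\Delta=(2)$; for a general type~$\Delta$ one would take an analogous long $\Delta$-path in a single relation and argue similarly. The paper does not spell this out either, so your level of detail already matches (indeed exceeds) the original.
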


\section{Splitting antichains}
\label{sec:split}

In this section, we prove that many finite maximal antichains in~$\CD$
split. To do so, we construct a partition $(\F,\D)$ for any finite
antichain~$\A$ and show that this partition is often a splitting of~$\A$;
that is, in many cases $\ups\A=\ups\F$ and $\downs\A=\downs\D$. So let
us reveal our construction of the partition.

\begin{spltng}
\label{pgf:macfd}
Let $\A=\{A_1,A_2,\dotsc,A_n\}$ be a finite maximal antichain in~$\CD$. Recursively, define the
sets $\F_0$, $\F_1$,~\dots,~$\F_n$ in this way:
\begin{enumerate}
\item Let $\F_0=\emptyset$.
\item For $i=1,2,\dotsc,n$: check whether there exists a \Ds~$X$ satisfying
\begin{itemize}
\item[(i)] $A_i < X$,
\item[(ii)] $F\notto X$ for any $F\in\F_{i-1}$, and
\item[(iii)] $A_j\notto X$ for any $j>i$.
\end{itemize}
If such a structure~$X$ exists, let $\F_i=\F_{i-1}\cup\{A_i\}$, otherwise let $\F_i=\F_{i-1}$.
\item Finally, let $\F=\F_n$ and $\D=\A\setminus\F$.
\end{enumerate}
\end{spltng}

We are just about to prove that $(\F,\D)$ defined above is a splitting
of~$\A$ unless $\A$~lies ``at the bottom'' of the order. To formalise
``the bottom'', consider $D^\ast$: the \Ds, whose components are exactly
all trees with at most one edge of each kind. So $T$~is a component
of~$D^\ast$ if and only if $T$~is a \Dt\ and $|R_i(T)|\le1$ for all
$i\in I$.

A \Ds~$X$ is \emph{small} if either $X\le D^\ast$ or there exists
$Y\le D^\ast$ such that $Y<X$ and whenever $Y<Z<X$ then $Z\le D^\ast$.

\begin{theorem}
Let $\A$ be a finite maximal antichain in the homomorphism
poset~$\CD$. Let $\F$, $\D$ be defined as in~\ref{pgf:macfd}. If no
element of~$\F$ is small, then $(\F,\D)$ is a splitting of~$\A$, that
is $\ups\A=\ups\F$ and $\downs\A=\downs\D$.
\end{theorem}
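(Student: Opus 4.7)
The statement decomposes into the upward inclusion $\ups\A\subseteq\ups\F$ and the downward inclusion $\downs\A\subseteq\downs\D$ (the reverse inclusions are immediate since $\F,\D\subseteq\A$). My plan is to dispose of the upward part purely from the algorithm and to use the hypothesis on smallness only in the downward part.

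For the upward inclusion, I would take $X\in\ups\A$ with $X\notin\A$ (the case $X\in\A$ being trivial) and choose the largest index $i$ with $A_i\le X$. If $A_i\in\F$ there is nothing to prove; otherwise $A_i\in\D$, so the algorithm rejected $A_i$ at step~$i$, meaning no structure satisfies (i)--(iii) simultaneously. But $X$ itself witnesses (i) strictly (since $X\notin\A$) and (iii) (by maximality of $i$ and $X\notin\A$), so (ii) must fail. This produces some $F\in\F_{i-1}\subseteq\F$ with $F\le X$, giving $X\in\ups\F$. No smallness assumption is needed here.

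For the downward inclusion, fix $X\notin\A$ with $X\le A_i$ for some $A_i\in\F$ (the case $A_i\in\D$ is immediate) and suppose for contradiction that no $D\in\D$ satisfies $X\le D$. By the upward part just established, $X$ cannot lie strictly above any element of~$\A$: if $A_j<X$ then applying the upward part to $X$ yields some $F\in\F$ with $F\le X<A_i$, violating the antichain property. Hence $X$ sits strictly below $\F$-elements only, and the task reduces to showing that $A_i$ is small, contradicting the hypothesis.

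The substantive obstacle is this last deduction. I would prove the sharper claim that every ``problematic'' $X$ --- meaning $X<A_i$ with $X\notin\downs\D$ --- actually satisfies $X\le D^{\ast}$. Once that holds the open interval $(X,A_i)$ is handled automatically: any $Z$ with $X<Z<A_i$ inherits the incomparability with $\D$ (otherwise $X<Z\le D\in\D$, contradicting the problematic property of $X$), and so by the same claim $Z\le D^{\ast}$. Together this exhibits $A_i$ as small via case~(2) of the definition, with witness $Y=X\le D^{\ast}$, contradicting the hypothesis. The technical core is therefore the claim $X\le D^{\ast}$; for this I would exploit the witness structure $Y_i$ supplied by the algorithm --- recall $Y_i>A_i$ while avoiding every $F\in\F_{i-1}$ and every $A_j$ with $j>i$ --- in combination with the Nešetřil--Tardif description of gaps from Section~\ref{ss:dual}. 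The aim is to show that the constraints placed on $Y_i$ force the pair $(X,A_i)$ into a gap-like configuration generated by a \Dt\ with at most one edge of each kind, so that $X$ is itself homomorphic to $D^{\ast}$.
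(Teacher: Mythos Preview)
Your overall architecture matches the paper's: the upward inclusion follows directly from the algorithm (your argument via the largest index~$i$ is correct), and the downward inclusion reduces to the key claim that every ``problematic'' $X$ --- that is, $X<A_i$ for some $A_i\in\F$ with $X\notin\downs\D$ --- satisfies $X\le D^\ast$. Your deduction that this claim forces $A_i$ to be small, via the observation that every $Z$ in the open interval $(X,A_i)$ is again problematic, is also correct and mirrors the paper exactly.

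The genuine gap is your final paragraph, where you propose to establish the key claim using the algorithmic witness $Y_i$ together with the gap characterisation from Section~\ref{ss:dual}. Neither tool addresses the combinatorial content of the claim. The witness $Y_i$ lives \emph{above} $A_i$ and records which antichain members fail to map into it; it carries no information about an $X$ sitting \emph{below}~$A_i$. The gap theorem describes covering pairs $(B,T)$ and ties each to a single \Dt\ and its dual; there is no mechanism by which it forces an arbitrary $X<A_i$, with $(X,A_i)$ typically not a gap at all, to land below~$D^\ast$. Your phrase ``gap-like configuration'' is left undefined and does not correspond to anything in the theory.

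The paper's attack on this claim is entirely different and rests on three ingredients you do not mention. First, a sparse-incomparability argument shows that every element of~$\F$ is \emph{balanced} (homomorphic to a \Dt): otherwise one could manufacture a high-girth structure incomparable with all of~$\A$, contradicting maximality. Second, a ``cycle-growing'' trick: from certain \emph{forbidden paths}~$P$ one builds large unbalanced structures~$W$, and maximality of~$\A$ forces some $F\in\F$ with $F\le W+X$; balancedness of~$F$ then yields $F\le P+X$, and since $F\notto X$ one gets $P\notto X$. Third, a structural lemma: a connected \Ds\ to which no forbidden path is homomorphic must map to a \Dt\ with at most one edge of each kind, whence $X\le D^\ast$. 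These are the ideas missing from your sketch.
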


\begin{proof}[Idea of proof]
The construction in~\ref{pgf:macfd} ensures that $\ups\A=\ups\F$. So it
remains to prove that $\downs\A=\downs\D$. We will assume that there is
a \Ds~$Y\in\downs\A\setminus\downs\D$ and prove that then $\F$~contains
a small element.

By definition of~$Y$, there exists $F\in\F$ such that $Y\le F$.  Using a
variant of sparse incomparability, it can be shown that each element
of~$\F$ is homomorphic to a \Dt\ (it is \emph{balanced}). Therefore
$Y$~is also balanced.

Next comes a cycle-growing trick. We consider forbidden paths (whose
precise definition is technical and we omit it here) and construct big
unbalanced structures~$W$ from them. Some considerations show  that for
each such $W$ there is $F\in\F$ such that $F\le W+Y$. Then it follows
that $F\le P+Y$, where $P$~is the appropriate forbidden path. But since
$F\notto Y$, we have $P\notto Y$.

Then we prove that a connected \Ds, to which no forbidden path is
homomorphic, admits a homomorphism to a \Dt\ with at most one edge of
each kind. Hence $Y\le D^\ast$ whenever $Y\in\downs\A\setminus\downs\D$.

So we have $Y\le F$ for some $F\in\F$. If $F\to D^\ast$, then $F$~is small
and we are done.  Otherwise suppose $Y\le X\le F$. Then each $X$ such
that $Y<X<F$ satisfies that $X\le D^\ast$, therefore $F$~is again small.
\end{proof}

In conclusion, we remark that splitting finite antichains are homomorphism
dualities in disguise. Indeed, the splitting $(\F,\D)$ of a finite
maximal antichain is a homomorphism duality. Conversely, let $(\F,\D)$
be a homomorphism duality and let
\[\A=\F\cup\{D\in\D: D\notto F\text{ for any } F\in\F\}.\]
Then $\A$~is obviously a finite maximal antichain in~$\CD$. For relational
structures with one or two relations all maximal antichains, even the
non-splitting ones, are created from finite dualities in this way. For
structures with more than two relations this is currently unknown.

\section{Conclusion and extensions}
\label{sec:conc}

Theorems \ref{thm:dd56} and \ref{thm:cd2dd34} do not cover all the
properties for all the classes $\DD_k$, $\CD_k$. We would like to find
out which classes have which properties.  In particular, the question is
interesting for structures that contain cycles only in some components,
and for structures with balanced cycles.

Another class of digraphs is introduced
in~\cite{DufErdNes:Antichains-in-the-homomorphism}: digraphs that contain
a directed cycle. This subset has the finite antichain extension property
and is cut-free. This class can be generalised to \Ds s by looking at
their \emph{directed shadows} (these are directed multigraphs constructed
by replacing each tuple $(x_1,\dotsc, x_t)$ of the \Ds\ with the directed
path $x_1 \to x_2 \to \cdots \to x_t$).  This class is an upward loose
kernel in~$\CD$ and it has the finite antichain extension property. Is
it also a downward loose kernel in~$\CD$?

In Proposition~\ref{prop:cutp} we describe infinitely many cut-points in
the homomorphism order.  Currently we do not know any other cut-points;
however, it remains open to give the complete characterisation.
In particular, can such a characterisation yield a new proof of the
splitting property (recall that a finite maximal antichain splits if it
contains no cut-points)?

Finally, the connection between gaps and dualities is not
restricted to the homomorphism order. A similar theory was developed
in~\cite{NesPulTar:HeytDual} for Heyting lattices. Thus it is worth
asking what additional axioms enable deriving simple conditions for
maximal antichains to split.

\end{document}